\newtheorem{Theorem}{Theorem}[section]
\newtheorem{Example}{Example}[section]
\newtheorem{Lemma}{Lemma}[section]
\newtheorem{Remark}{Remark}[section]
\def\bc{\begin{center}}
\def\ec{\end{center}}
\def\s2c{\vskip 2cm}
\numberwithin{equation}{section}
\def\bt{\begin{Theorem}}
\def\et{\end{Theorem}}
\def\bl{\begin{Lemma}}
\def\el{\end{Lemma}}
\def\bcor{\begin{Corollary}}
\def\ecor{\end{Corollary}}
\begin{document}

\title{A simple algorithm for the simple bilevel programming (SBP) problem}

\author{Stephan Dempe\footnote{Faculty of Mathematics and Computer Science, TU Bergakademie Freiberg, Germany, e-mail: dempe@extern.tu-freiberg.de}, Joydeep Dutta\footnote{Department of Economic Sciences, Indian
Institute of Technology, Kanpur, India, e-mail: jdutta@iitk.ac.in}, Tanushree Pandit\footnote{Department of Mathematics, Cochin University of Science and Technology, Kerala, India, e-mail: tpandit@cusat.ac.in. Work of this author has been supported by the project SMNRI, CUSAT.} and K. S. Mallikarjuna Rao \footnote{Department of Industrial Engineering and Operations Research, Indian Institute of Technology, Bombay, India, e-mail: mallik.rao@iitb.ac.in}}
\maketitle

\begin{abstract}
In this article we intend to develop a simple and implementable algorithm for minimizing a convex function over the solution set of another convex optimization problem. Such a problem is often referred to as a simple bilevel programming (SBP) problem. One of the key features of our algorithm is that we make no assumption on the differentiability of the upper level objective, though we will assume that the lower level objective is smooth. Another key feature of the algorithm is that it does not assume that the lower level objective has a Lipschitz gradient, which is a standard assumption in most of the well-known algorithms for this class of problems. We present the convergence analysis and also some numerical experiments demonstrating the effectiveness of the algorithm.
\end{abstract}
\noindent {\textbf Keywords :}
Bilevel Programming, Convex Optimization, Descent Algorithms,\\
Nonsmooth Optimization, Lipschitz Gradient.\\

\noindent {\textbf Mathematics Subject Classification: } 90C25, 90C29.\\

\section{Introduction}
We consider the following Simple Bilevel Programming (SBP for short) problem.
\begin{equation}\label{sbp4}
\begin{cases}
\min f(x) \\
\mbox{subject to } x \in \mbox{argmin}\{ g(y) : y \in C \}.
\end{cases}
\end{equation}

\noindent We make the following assumptions throughout this paper.
\begin{enumerate}
\item[$\bullet$] The constrained set $ C $ is nonempty, compact and convex subset of $ \mathbb{R}^n $.

\item[$\bullet$] The upper level objective function $ f: \mathbb{R}^n \rightarrow \mathbb{R} $ and the lower level objective function $ g: \mathbb{R}^n \rightarrow \mathbb{R} $ are convex on $ \mathbb{R}^n $.

\item[$\bullet$] $g $ is assumed to be continuously differentiable on an open set containing $C$.
\end{enumerate}

This paper can be in some sense viewed as a continuation of the work carried out in \cite{Part1} and \cite{Part2}. In \cite{Part2}, an algorithm was developed where both the upper and lower level objective functions were assumed to be non-smooth. The algorithms described in a non-smooth setting involves computing the subgradients or $ \varepsilon $-subgradients, which is quite a hard job to do. So such algorithms are difficult to implement and can be implemented for very particular problems. However developing algorithms based on non-smooth data has its own advantage. We can get rid of the Lipschitz assumption on the gradient of the upper and lower level objective function; since we will not deal with the gradient in non-smooth setting. The next natural question that comes to us is that if we can get rid of the Lipschitz assumption on the gradients of the upper and lower level objective functions in a smooth (SBP) setup. In this article we develop an algorithm to fulfil this aim. Here we make no differentiability assumption on the upper level objective while the lower level objective function is considered to be differentiable but the gradient need not be Lipschitz continuous. We test our algorithm on several problems to demonstrate its efficiency.\\
Let us now discuss little bit about the algorithms present in the literature for smooth and non-smooth (SBP) problems. One of the earliest work in the area of implementable algorithm for the (SBP) problem is done by Solodov \cite{Solodov1}. In \cite{Solodov1}, he considered the upper and lower level objective functions of an (SBP) problem of the form (\ref{sbp4}) to be smooth with Lipschitz gradient. Solodov dealt with the non-smooth (SBP) problem in \cite{Solodov2} but the lower level problem was considered to be unconstrained. In that paper he used the bundle method to develop his algorithms for (SBP) problem. If the problem considered by Beck and Sabach \cite{Beck} is formulated as (SBP), then the objective function of the lower level problem is assumed to have a Lipschitz continuous gradient. The non-smooth (SBP) problem was also approached by Cabot \cite{Cabot}, where he has used subgradients to deal with the non-smoothness of the objective function. Given the $ k^{th} $ iteration $ x_k $, the next iteration $ x_{k+1} $ comes from the following expression
\begin{eqnarray}\label{opc}
- \frac{ x_{k+1} - x_k }{\lambda_k } \in \partial_{\varepsilon_k} ( g + \eta_k f ) ( x_{k+1} ),
\end{eqnarray}
where $ \varepsilon_k >0 $, $ \eta_k \rightarrow 0^+ $ and $ \lambda_k $ is a non-negative step size. Helou and Simoes \cite{Helou} has recently offered an algorithm for the non-smooth bilevel programming problem with constrained lower level problem which can be implemented for some specific applications by using a more relaxed version of the iteration scheme given by (\ref{opc}). Inspired by Cabot's idea, they have considered some intermediate step in the algorithm additional to Cabot's approach which helps in the computation. The algorithm they have presented includes optimality operators $ O_f $ and $ O_g $ corresponding to the objective functions $ f $ and $ g $ with certain properties. But the example for this kind of optimality operators is created with the help of subgradients of the objective functions which is again hard to compute in many cases.
So what we see in various papers is that either the objective functions are non-smooth or have Lipschitz gradient whenever it is smooth. The main reason behind this is that Lipschitz continuity of the objective function plays an important role in the convergence analysis of the algorithm. In this paper we have presented the numerical example \ref{example3}, where we can see that the gradient of the lower level objective function is Lipschitz continuous over the compact constraint set but not on whole $ \mathbb{R}^n $. In fact there are functions which do not have Lipschitz continuous gradient over a compact set also, which makes our assumption more relevant. Here we present the example.
\begin{Example}
    Consider the convex function $ g : \mathbb{R}^n \rightarrow \mathbb{R} $ such that
    \[
    g(x) = \begin{cases}
        x^{\frac{3}{2}}, \quad x\geq 0\\
        0, \quad x<0.
    \end{cases}
    \]
    Then $ g $ is continuously differentiable with
    \[
    g^{\prime}(x) = \begin{cases}
        \frac{3}{2}x^{\frac{1}{2}}, \quad x\geq 0\\
        0, \quad x<0.
    \end{cases}
    \]
    Now if we consider the constraint set as $ C = [-1,1]$, then $ g^{\prime} $ is not Lipschitz continuous over $ C $.
\end{Example}
The paper is organized as follows. In section 2, we provide the key motivation and ideas which has led to the development of the algorithm that we present here. In section 3, we present some lemmas and results already present in the literature which will be used in the convergence analysis later. In section 4, we present our algorithm as well as its convergence analysis. Finally in section 5 we present the results of numerical experimentation, showing the efficiency of our algorithm. To the best of our knowledge there does not seem to be any well-established set of test problems for (SBP). This may be due to the fact that the literature on algorithms for (SBP) has just started to emerge and only a handful of work in this direction has been done.

\section{General scheme for SBP} \label{gensch}
In this section, we will first present a general scheme to solve the simple bilevel programming problem (\ref{sbp4}). We plan to solve the following problem $( P_k )$ given by
\begin{equation*}
\begin{cases}
\min f(x) \\
\mbox{subject to } g(x) \leq \alpha_k \\
x \in C,
\end{cases}
\end{equation*}
for each $ k \in \mathbb{N} $. We expect that the sequence of the approximate solutions of the above problem will converge to the solution of (SBP) problem when $ \alpha_k \rightarrow \min\limits_C g $, as $ k \rightarrow \infty $. Thus comes the question as to why we do not solve the lower level problem of (\ref{sbp4}) first and then using the solution we solve the upper level problem. In other words, why don't we compute $ \min_C g = \alpha $ (say) and then solve the following problem
\begin{equation} \label{rsbp}
\begin{cases}
\min f(x)  \\
\mbox{subject to } g(x) \leq \alpha  \\
x \in C.
\end{cases}
\end{equation}
Note that the problem (\ref{rsbp}) does not satisfy the Slater condition, which makes it hard to solve. Following Beck and Sabach \cite{Beck} let us now explain the other reasons why this would not be a good idea to try to solve the problem (\ref{rsbp}) directly. When we solve an optimization problem, we hardly reach to the actual solution as one needs to terminate the sequence of iterates and thus stopping the algorithm. In most cases we are satisfied with an approximate solution upto a desired precision. The same way when we minimize the lower level objective function $ g $ over $ C $, we get $ \min_C g \pm \varepsilon $ where $ \varepsilon > 0 $ is a pre-decided measure of precision. Now note that for $ \alpha = \min_C g - \varepsilon $, the problem (\ref{rsbp}) is infeasible. Even for $ \alpha = \min_C g + \varepsilon $, solution of the problem (\ref{rsbp}) may not even provide any solution of the original (SBP) problem (\ref{sbp4}) and may not even be a good approximation. In fact through the following example we show that instead of using $ \alpha = \min_C $ in the constraints of (\ref{rsbp}) we use $ \alpha + \varepsilon $, which is an approximate optimal objective value of the lower level problem. By replacing $ \alpha $ with $ \alpha + \varepsilon $ in (\ref{rsbp}) and then solving (\ref{rsbp}) we will get a minimizer of (\ref{rsbp}) which is not the minimizer or even a good approximate minimizer of the original (SBP) problem.

\begin{Example}\textrm
Let us take $ \varepsilon > 0 $ be the pre-decided measure of precision. Now consider the following bilevel problem.
\begin{equation}\label{op}
\begin{cases}
 \min f(x) = (x-1)^2  \\
\mbox{subject to }  \\
x \in \mbox{argmin } \{ g(x) : x \in [-2, 2] \},
\end{cases}
\end{equation}
with
\begin{eqnarray} \label{g}
g(x) =
\begin{cases}
0 \quad & \mbox{if} \quad x \in ( - \infty , 0 ],\\
\delta x^2 \quad & \mbox{if} \quad x \in (0,\infty),
\end{cases}
\end{eqnarray}
where $ 0 < \delta \leq \varepsilon $.\\
Clearly, $ \mbox{argmin}_C g = [-2 , 0 ] $, $ \min_C g = 0 $ and $ \varepsilon $-solution set of the lower level problem {\textit i.e.} $ \{ x \in [-2, 2] : g(x) \leq \varepsilon \} $ contains the set $ [-2, 1] $. Which shows that if we consider the following problem
\begin{equation}\label{sp}
\begin{cases}
\min (x-1)^2  \\
\mbox{s. t. } g(x) \leq \varepsilon  \\
x \in [-2, 2],
\end{cases}
\end{equation}
the solution of (\ref{sp}) is $ \bar{x} = 1 $ with optimal value $ f(\bar{x}) = 0 $. It does not matter how small $ \varepsilon $ is but as long as $ \delta \leq \varepsilon $ and $ g $ is defined through (\ref{g}), (\ref{sp}) will have the same solutions. Whereas the solution of the original (SBP) problem (\ref{op}) is $ x^* = 0 $ with optimal value $ f(x^*) = 1 $. Thus clearly, the problem (\ref{sp}) in no way helps us to find the solution of problem (\ref{op}). \hfill $\Box$

\end{Example}
Here comes the designing of the problem $ (P_k) $, so that it will fulfill our goal. In order to solve $ (P_k) $ it will be helpful if the Slater condition holds. If $ \alpha_k > \min\limits_C g $, then by the definition of minimum we have that Slater condition holds for the problem $ (P_k) $ provided that $ \mbox{int } C \neq \emptyset $. Now all that is left is to design $ \alpha_k $ in a manner that it meets all our requirements. We have used the projected gradient method on $ g $ to do so and it will be discussed in details later.\\
The following theorem asserts that the solutions of these problems actually leads to the solution of the (SBP) mentioned above. In all the discussions that follow, we will denote the term $ \min\limits_C g $ by $ \alpha $, {\textit i.e.} we set $ \alpha = \min_C g $.

\begin{Theorem} \label{gen}
Let $ \{ \alpha_k \} $ is a sequence of real numbers such that $ \alpha_k > \alpha $ for all $ k \in \mathbb{N} $ and $ \lim\limits_{ k \rightarrow \infty } \alpha_k = \alpha $. Also let that $ x_k $ be a solution of the problem $ (P_k) $ mentioned above. Then any accumulation point of $ \{ x_k \} $ is a solution of the (SBP) problem.
\end{Theorem}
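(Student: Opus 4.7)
The plan is to first extract an accumulation point, then verify that it is feasible for (SBP), and finally use the strict inequality $\alpha_k > \alpha$ to compare objective values. Since $C$ is compact and every $x_k$ lies in $C$, the sequence $\{x_k\}$ admits at least one accumulation point; let $\bar{x}$ be one and pick a subsequence $x_{k_j} \to \bar{x}$. Closedness of $C$ gives $\bar{x} \in C$ immediately.

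Next I would show that $\bar{x}$ solves the lower level problem, i.e., $\bar{x} \in \mbox{argmin}\{g(y) : y \in C\}$. Since $x_{k_j}$ is feasible for $(P_{k_j})$, we have $g(x_{k_j}) \leq \alpha_{k_j}$. Passing to the limit, using continuity of $g$ and the hypothesis $\alpha_k \to \alpha = \min_C g$, yields $g(\bar{x}) \leq \alpha$. Combined with $\bar{x} \in C$, this forces $g(\bar{x}) = \alpha$, so $\bar{x}$ is a minimizer of $g$ over $C$ and hence feasible for (SBP).

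The crucial ingredient for upper level optimality is the strict inequality $\alpha_k > \alpha$. Let $x^\ast$ be any solution of (SBP). Then $x^\ast \in C$ and $g(x^\ast) = \alpha < \alpha_k$, so $x^\ast$ is feasible for every $(P_k)$. Since $x_k$ solves $(P_k)$, we obtain $f(x_k) \leq f(x^\ast)$ for all $k$. Continuity of $f$ (a finite convex function on $\mathbb{R}^n$ is continuous) then gives, upon letting $j \to \infty$ along the subsequence, $f(\bar{x}) \leq f(x^\ast)$. Together with the feasibility established in the previous step, this shows $\bar{x}$ is a solution of (SBP).

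The only real subtlety is ensuring that $x^\ast$ is admissible in each $(P_k)$; this is precisely what the assumption $\alpha_k > \alpha$ (rather than merely $\alpha_k \geq \alpha$) buys us, and it is the reason one cannot replace the scheme by the direct formulation \eqref{rsbp} discussed above. The rest is continuity of $f$ and $g$ plus compactness of $C$, all of which are already built into the standing assumptions, so I expect no further obstacles.
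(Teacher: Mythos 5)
Your proof is correct and follows essentially the same route as the paper: establish $\bar{x}\in\mbox{argmin}_C g$ via $g(x_{k_j})\le\alpha_{k_j}$ and continuity, then use $\alpha_k>\alpha$ to make a lower-level minimizer feasible for every $(P_k)$ and pass to the limit in $f(x_k)\le f(\cdot)$. The only cosmetic difference is that you compare against a solution $x^\ast$ of (SBP) (whose existence compactness guarantees), whereas the paper compares against an arbitrary $\tilde{x}\in\mbox{argmin}_C g$, which avoids even that existence step.
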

\proof Let $ \bar{x} $ is an accumulation point of the sequence $ \{ x_k \} $. Then there exists a subsequence $ \{ x_{k_j} \} $ such that $ \lim\limits_{ j \rightarrow \infty } x_{k_j} = \bar{x} $. As $ x_{k_j} $ is a feasible point of the problem $ (P_{k_j}) $ and $ C $ is closed, we have $ \bar{x} \in C $ and
\begin{eqnarray*}
g(x_{k_j}) \leq \alpha_{k_j}.
\end{eqnarray*}
Then by continuity of $ g $, $ g(\bar{x}) \leq \alpha $. This implies that $ \bar{x} \in \mbox{argmin}_C g $. By the assumption on $ \alpha_k $, for any $ \tilde{x} \in \mbox{argmin}_C g $, $ g(\tilde{x}) < \alpha_k $. Therefore $ \tilde{x} $ is a feasible point of $ (P_k) $ for all $ k \in \mathbb{N} $. Now as $ x_{k_j} $ is a solution of $ (P_{k_j}) $, we have
\begin{eqnarray*}
f( x_{k_j} ) \leq f(\tilde{x}) \quad \forall j \in \mathbb{N}.
\end{eqnarray*}
Then again by continuity of $ f $,
\begin{eqnarray*}
f(\bar{x}) \leq f( \tilde{x} ).
\end{eqnarray*}
Since this is true for any $ \tilde{x} \in \mbox{argmin}_C g $, we can conclude that $ \bar{x} $ is a solution of the (SBP) problem. \qed

\noindent Note that in the above theorem (Theorem \ref{gen} ) if we assume $ C $ to be a compact, convex set then for each $ k \in \mathbb{N}$, the problem $(P_k)$ has a solution and any accumulation point of the sequence of solutions of $(P_k)$ is a solution of the (SBP). Additionally if $ f : \mathbb{R}^n \rightarrow \mathbb{R} $ (or $ f : C \rightarrow \mathbb{R} $ ) be a strongly convex function then the sequence of solutions of $(P_k)$ converges to the unique solution of (SBP). We omit the simple proof.

\begin{Remark}
    After the proof of the above theorem it must be clear to the reader that we can actually revoke the compactness assumption on the constrained set $ C $ by assuming that the set $ \mbox{argmin}_C g $ is non-empty. The compactness of $ C $ is to ensure that $ \mbox{argmin}_C g $ is non-empty and compact which again implies that the problem \ref{sbp4} has a solution.
\end{Remark}

\begin{Remark}
The trickiest part in this scheme is to create such a sequence $\{ \alpha_k\} $. One way to do it may be the following. Construct a sequence $ \{ g(y_k) \} $, with $ y_k \in C $ such that $ g(y_k) \rightarrow \alpha $. By definition $ \alpha \leq g(y_k) $. Thus for any $ \eta_k >0 $, $ k \in \mathbb{N} $ with $ \eta_k \rightarrow 0 $, we have $ \alpha < g(y_k) + \eta_k $. Hence, we can set $ \alpha_k = g(y_k) + \eta_k $. One of key ideas in our algorithm will be to construct the sequence $ \{ y_k \} $. Observe that for the proof of Theorem \ref{gen} we need no compactness assumption on the convex set $ C $. \qed
\end{Remark}

\section{Preliminary Results}
In this section, we state a few results that are useful in developing our algorithm and are needed to understand the convergence analysis. These results are very well known and are given below. The lemma \ref{lemma1} is an application of Theorem 9.8 and Theorem 9.10 given in the book \cite{Beck_book} by Amir Beck.

\begin{Lemma} \label{lemma1}
Let $ g : \mathbb{R}^n \rightarrow \mathbb{R} $ be a smooth convex function, and let $ C \subset \mathbb{R}^n $ be a closed convex set. Also, let $ x \in C $ and $ z = P_C ( x - \beta \nabla g(x) ) $, where $ \beta >0 $. Then
\begin{enumerate}
\item[(i)] $ \langle \nabla g(x), z-x \rangle \leq 0 $.
\item[(ii)] $ \langle \nabla g(x), z-x \rangle = 0 $ if and only if $ x \in \mbox{argmin}_C g $.
\end{enumerate}
\end{Lemma}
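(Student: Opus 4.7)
The plan is to prove both items by exploiting the defining variational inequality of the Euclidean projection, namely that for any $w \in \mathbb{R}^n$ and any $y \in C$,
\[
\langle w - P_C(w),\, y - P_C(w) \rangle \leq 0.
\]
Applying this with $w = x - \beta \nabla g(x)$, so that $P_C(w) = z$, and choosing the test point $y = x$ (which lies in $C$ by hypothesis), I would obtain
\[
\langle x - z - \beta \nabla g(x),\, x - z \rangle \leq 0,
\]
which rearranges to $\|x - z\|^2 \leq \beta \langle \nabla g(x), x - z \rangle$. Since $\beta > 0$, this immediately yields
\[
\langle \nabla g(x),\, z - x \rangle \leq -\tfrac{1}{\beta}\|x - z\|^2 \leq 0,
\]
establishing (i) together with the auxiliary inequality that I will reuse below.

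For (ii), I would handle both implications separately. First, if $\langle \nabla g(x), z - x \rangle = 0$, the inequality just derived forces $\|x - z\|^2 = 0$, hence $x = z$, i.e., $x$ is a fixed point of $P_C(\cdot - \beta \nabla g(\cdot))$. Feeding this back into the projection inequality with arbitrary $y \in C$ gives $-\beta \langle \nabla g(x), y - x \rangle \leq 0$, i.e., $\langle \nabla g(x), y - x \rangle \geq 0$ for all $y \in C$. Convexity of $g$ then yields $g(y) \geq g(x) + \langle \nabla g(x), y - x \rangle \geq g(x)$, so $x \in \mathrm{argmin}_C g$. Conversely, if $x \in \mathrm{argmin}_C g$, the first-order optimality condition for convex minimization over a convex set gives $\langle \nabla g(x), y - x \rangle \geq 0$ for every $y \in C$; specialising $y = z \in C$ yields $\langle \nabla g(x), z - x \rangle \geq 0$, and combining this with (i) gives equality.

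There is no serious obstacle in this argument — the only subtlety is remembering that the projection inequality goes in a fixed direction, so one must be careful to set $y = x$ (not $y = z$) to extract (i), and to use the first-order convex optimality condition rather than a gradient-vanishing condition for the converse in (ii), since we are minimising over a constrained set $C$ and not over all of $\mathbb{R}^n$.
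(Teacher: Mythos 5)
Your proof is correct: the variational characterization of the projection applied with test point $y=x$ gives (i) together with the quantitative bound $\langle \nabla g(x), z-x\rangle \le -\frac{1}{\beta}\|x-z\|^2$, and the two directions of (ii) follow exactly as you argue from the fixed-point property and the first-order optimality condition over $C$. The paper does not write out a proof at all---it simply cites Theorems 9.8 and 9.10 of Beck's book---and your argument is precisely the standard one underlying those cited results, so there is nothing to flag.
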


\begin{Lemma} \label{lemma4}
Take $\sigma \in (0,1)$, $x \in C$ and $v \in \mathbb{R}^n$ such that $\langle \nabla g(x), v \rangle <0$. Then there exists $\bar{\gamma} <1$ such that
\begin{eqnarray*}
g(x+\gamma v) < g(x) + \sigma \gamma \langle \nabla g(x), v \rangle, \quad \forall \gamma \in (0,\bar{\gamma}]
\end{eqnarray*}
\end{Lemma}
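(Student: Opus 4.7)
The plan is to invoke the differentiability of $g$ at $x$ to expand $g(x+\gamma v)$ to first order in $\gamma$, and then exploit the fact that the scalar $\sigma \in (0,1)$ gives strict slack between the true directional derivative and its $\sigma$-fraction (since the directional derivative is strictly negative).

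Concretely, I would first rewrite the claimed inequality as
\begin{equation*}
\frac{g(x+\gamma v) - g(x)}{\gamma} - \sigma \langle \nabla g(x), v \rangle < 0,
\end{equation*}
so that the $\gamma$ appears only through a difference quotient. Because $g$ is differentiable at $x$, the quotient on the left tends to $\langle \nabla g(x), v \rangle$ as $\gamma \to 0^+$, and hence the whole expression tends to $(1-\sigma)\langle \nabla g(x), v \rangle$. The assumption $\sigma \in (0,1)$ together with $\langle \nabla g(x), v \rangle < 0$ makes this limit strictly negative.

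Therefore, by the definition of the limit applied with tolerance, say, $\tfrac{1}{2}|(1-\sigma)\langle \nabla g(x), v \rangle|$, there exists some $\gamma_0 > 0$ such that the displayed expression is strictly negative for every $\gamma \in (0,\gamma_0]$. Setting $\bar\gamma := \min\{\gamma_0, 1/2\}$ then simultaneously delivers the required strict inequality and the side condition $\bar\gamma < 1$ that the lemma demands.

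I do not anticipate a genuine obstacle: this is essentially the classical Armijo sufficient-decrease lemma, and the only subtle point is making sure the bound is kept strictly below $1$, which is handled by the $\min$ with $1/2$ in the definition of $\bar\gamma$. Notably, neither convexity of $g$ nor Lipschitz continuity of $\nabla g$ is needed — only differentiability of $g$ at the single point $x$ is used — which is consistent with the paper's emphasis on avoiding Lipschitz gradient assumptions.
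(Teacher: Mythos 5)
Your proposal is correct and follows essentially the same route as the paper's own proof: both use differentiability of $g$ at $x$ to identify the limit of the difference quotient as $\langle \nabla g(x), v\rangle$, observe that this limit is strictly below $\sigma\langle \nabla g(x), v\rangle$ because the directional derivative is negative and $\sigma\in(0,1)$, and conclude via the definition of the limit. Your explicit $\min\{\gamma_0,1/2\}$ to guarantee $\bar\gamma<1$ is a minor tidying of a point the paper leaves implicit.
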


\proof By the definition of differentiabilty of $g$,
\begin{eqnarray*}
\lim\limits_{\gamma \rightarrow 0} \frac{g(x + \gamma v) - g(x)}{\gamma}= \langle \nabla g(x), v \rangle.
\end{eqnarray*}
Now since $\langle \nabla g(x),v \rangle <0$ and $ \sigma \in (0,1)$, we have
\begin{eqnarray*}
\langle g(x), v \rangle < \sigma \langle g(x), v \rangle.
\end{eqnarray*}
Therefore,
\begin{eqnarray*}
\lim\limits_{\gamma \rightarrow 0}\frac{g(x+\gamma v)-g(x)}{\gamma}< \sigma \langle \nabla g(x), v
\rangle.
\end{eqnarray*}
This implies that there exists $\bar{\gamma} <1$ such that
\begin{eqnarray*}
g(x+\gamma v)-g(x)-\gamma \sigma \langle \nabla g(x), v \rangle <0, \quad \forall \gamma \in (0,\bar{\gamma}].
\end{eqnarray*}
Hence,
\begin{eqnarray*}
g(x+\gamma v) < g(x) + \sigma \gamma \langle \nabla g(x), v \rangle, \quad \forall \gamma \in (0,\bar{\gamma}]
\end{eqnarray*} \qed\\
The next lemma is originally from \cite{Part2} named as Lemma 2.1.
\begin{Lemma} \label{rcl}
Let $ f, g : \mathbb{R}^n \rightarrow \mathbb{R} $ are convex functions and $ C $ be a closed, convex subset of $ \mathbb{R}^n $. Also let us assume that the set $ S:= \mbox{argmin}_{S_0} f $ is non-empty and bounded where
$ S_0 := \mbox{argmin}_C g  $. Then for any $ M_1, M_2 \in \mathbb{R} $, the set
\begin{eqnarray*}
K:= C \cap \{ x\in \mathbb{R}^n : g(x) \leq M_1 \} \cap \{ x \in \mathbb{R}^n : f(x) \leq M_2\}
\end{eqnarray*}
is bounded.
\end{Lemma}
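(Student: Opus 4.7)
My plan is to argue via recession cones. Since $C$ is closed convex and the convex functions $f, g$ are continuous on $\mathbb{R}^n$, their sublevel sets $\{g \leq M_1\}$ and $\{f \leq M_2\}$ are closed convex, so $K$ is closed convex as well. Because a nonempty closed convex subset of $\mathbb{R}^n$ is bounded if and only if its recession cone reduces to $\{0\}$, the task reduces to showing $0^+ K = \{0\}$ (the case $K = \emptyset$ is trivial).

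I would then pick an arbitrary $d \in 0^+ K$ and seek to conclude $d = 0$. Fixing any $\hat{x} \in K$, the ray $\{\hat{x} + t d : t \geq 0\}$ remains in $K$, and therefore in each of the three constituent sets $C$, $\{g \leq M_1\}$, and $\{f \leq M_2\}$. Hence $d$ lies in the recession cones of all three. The key analytical input is the classical fact (see, e.g., Rockafellar, Theorem~8.7) that the recession cone of a nonempty sublevel set of a closed convex function is independent of the chosen level. Since $S \subseteq S_0$ is nonempty by hypothesis, the minima $\alpha := \min_C g$ and $\beta := \min_{S_0} f$ are attained. Level-independence therefore upgrades the previous inclusions to $d \in 0^+\{g \leq \alpha\}$ and $d \in 0^+\{f \leq \beta\}$, while from $C$ itself we already have $d \in 0^+ C$.

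Finally, I would note that $S_0 = C \cap \{g \leq \alpha\}$ and $S = C \cap \{g \leq \alpha\} \cap \{f \leq \beta\}$. Because $S$ is nonempty and this is a finite intersection of closed convex sets, the recession cone of the intersection equals the intersection of the individual recession cones; thus $d \in 0^+ S$. The boundedness hypothesis on $S$ then gives $0^+ S = \{0\}$, forcing $d = 0$. This proves $0^+ K = \{0\}$ and, together with the reduction in the first step, yields the boundedness of $K$. I expect the main subtlety to be in the joint invocation of the level-independence of recession cones of sublevel sets and of the intersection formula for recession cones; both require the corresponding sets to be nonempty, a condition that is secured here precisely by the nonemptiness of $S$.
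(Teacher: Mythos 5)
Your recession-cone argument is correct and complete: the reduction to $0^+K=\{0\}$, the level-independence of recession cones of nonempty sublevel sets (Rockafellar, Theorem 8.7), and the intersection formula for recession cones of closed convex sets with nonempty intersection are all invoked with their hypotheses verified, the nonemptiness of $S$ doing exactly the work you identify. The paper itself gives no proof of this lemma (it is quoted as Lemma 2.1 of the cited predecessor paper), so there is nothing in-text to compare against; your proof is the standard one for results of this type and can stand as a self-contained justification.
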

\noindent We say that a non-negative sequence $ a_n $ is summable if $ \sum\limits_{n=0}^{\infty} a_n < \infty $ or another way to say that $ \sum\limits_{n=0}^{\infty} a_n $ converges. The following lemma is about that.
\begin{Lemma}\label{fejer}
Let $\{ \alpha_k \}$ be a sequence which is bounded below and $\eta_k \geq 0$ be a summable sequence such that
\[
\alpha_{k+1} \leq \alpha_k + \eta_k
\]
for each $k =0,1,2, \cdots$. Then the sequence $\{ \alpha_k \}$ converges.
\end{Lemma}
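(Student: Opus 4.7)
The plan is to produce a monotone auxiliary sequence by absorbing the perturbation tail into $\alpha_k$, since the summability of $\eta_k$ guarantees that the tail sum is finite and vanishes. Concretely, I would set
\[
T_k := \sum_{j=k}^{\infty} \eta_j, \qquad \beta_k := \alpha_k + T_k.
\]
The summability of $\{\eta_k\}$ ensures $T_k$ is well-defined, non-negative, and $T_k \to 0$ as $k \to \infty$.

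Next, I would verify that $\{\beta_k\}$ is non-increasing. A direct computation gives
\[
\beta_{k+1} - \beta_k = (\alpha_{k+1} - \alpha_k) + (T_{k+1} - T_k) = (\alpha_{k+1} - \alpha_k) - \eta_k \leq 0,
\]
where the inequality is exactly the hypothesis $\alpha_{k+1} \leq \alpha_k + \eta_k$. So $\{\beta_k\}$ is monotonically non-increasing.

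To conclude, I would observe that $\{\beta_k\}$ is bounded below: if $L$ is a lower bound for $\{\alpha_k\}$, then $\beta_k = \alpha_k + T_k \geq L + 0 = L$ since $T_k \geq 0$. A non-increasing sequence that is bounded below converges in $\mathbb{R}$, so $\beta_k \to \beta^\ast$ for some $\beta^\ast \in \mathbb{R}$. Since $\alpha_k = \beta_k - T_k$ and $T_k \to 0$, it follows immediately that $\alpha_k \to \beta^\ast$, proving the claim.

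There is no real obstacle here; the only non-obvious step is choosing the right auxiliary sequence, and once $\beta_k := \alpha_k + T_k$ is written down the rest is a one-line monotone convergence argument. If desired, the proof can alternatively be phrased via $\limsup$ and $\liminf$ of $\alpha_k$, showing they coincide, but the monotone auxiliary-sequence route is cleaner and keeps the argument to a few lines.
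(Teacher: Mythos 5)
Your proof is correct. Every step checks out: $T_k=\sum_{j=k}^\infty\eta_j$ is finite by summability, $T_{k+1}-T_k=-\eta_k$ gives $\beta_{k+1}-\beta_k=(\alpha_{k+1}-\alpha_k)-\eta_k\le 0$, the lower bound transfers since $T_k\ge 0$, and $T_k\to 0$ lets you pass convergence from $\beta_k$ back to $\alpha_k$. This is, however, a genuinely different route from the paper's. The paper iterates the hypothesis to get $\alpha_k\le \alpha_l+\sum_{j=l}^{k-1}\eta_j$, deduces boundedness above, takes $\limsup_k$ to obtain $\limsup_k\alpha_k\le \alpha_l+\sum_{j=l}^\infty\eta_j$, and then uses the vanishing tail to conclude $\limsup_k\alpha_k\le\liminf_l\alpha_l+\varepsilon$ for every $\varepsilon>0$ --- exactly the $\limsup$/$\liminf$ variant you mention at the end as an alternative. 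The two arguments encode the same information (your $T_k$ is precisely the tail the paper estimates), but yours packages it as a single monotone-convergence statement, which is arguably shorter and makes the mechanism --- a quasi-Fej\'er sequence becomes monotone after absorbing the perturbation tail --- more transparent; the paper's version avoids introducing an auxiliary sequence and works directly with the limits, at the cost of an $\varepsilon$-chase. Either is a complete and acceptable proof.
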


\begin{proof}
First note that for $k > l$,
\[
\alpha_k \leq \alpha_l + \sum_{j = l}^{k-1} \eta_j
\]
This immediately implies that the sequence $\{ \alpha_k \}$ is bounded above also. Letting  $k \to \infty$
in the above inequality, we get
\begin{equation}\label{series}
\limsup_{k \to \infty} \alpha_k \leq \alpha_l + \sum_{j=l}^\infty \eta_j
\end{equation}
As $ \{ \eta_k \} $ is a summable seqence, for any $ \varepsilon>0 $ there exists $ N \in \mathbb{N} $ such that
\[
\sum_{j=l}^\infty \eta_j < \varepsilon \quad \forall~ l\geq N.
\]
Then (\ref{series}) together with the above inequality implies that
\begin{equation*}
\limsup_{k \to \infty} \alpha_k < \alpha_l + \varepsilon \quad \forall~ l\geq N.
\end{equation*}
Therefore,
\[
\limsup_{k \to \infty} \alpha_k \leq  \liminf_{l \to \infty} \alpha_l + \varepsilon.
\]
Since this is true for any arbitrary $ \varepsilon $, we can conclude that $ \limsup_{k \to \infty} \alpha_k \leq  \liminf_{l \to \infty} \alpha_l $ and hence the sequence $ \{\alpha_k \} $ converges.
\end{proof}

\section{Algorithm for (SBP) with smooth lower level objective function} \label{smoothalgo}
In this section we are going to present an algorithm for the (SBP) problem, where the lower level function $ g $ is smooth, while there is no restriction on the upper level function $ f $. The key is to use a modified version of the projected gradient method which was used by Iusem \cite{Iusem03} to solve a smooth convex optimization problem. We merge the scheme used by Iusem with the general scheme mentioned in section \ref{gensch} to create an algorithm for solving the (SBP) problem with smooth lower level objective function. Further we shall not require the gradient of the lower level function to be Lipschitz continuous. The best part of this algorithm is that it is easily implementable.
We will now present our algorithm.\\

\noindent {\textbf Algorithm for (SBP):  (SBP-LFS)}\\

\begin{enumerate}
\item $x_0 \in C$, $\eta_k > 0$ such that $\sum \eta_k < \infty$, $\beta_k \in [ \tilde{\beta}, \hat{\beta}]$.

\item Compute $z_k = P_C (x_k  - \beta_k \nabla g(x_k))$.
	\begin{enumerate}
		\item If $\langle \nabla g(x_k), z_k - x_k \rangle = 0$, $\alpha_k = g(x_k) = \min_C g$.
		\item If $\langle \nabla g(x_k), z_k - x_k \rangle < 0$, let
			\[
				l(k) = \min \{ j \in \mathbb{Z}^+ : g(x_k + 2^{-j} (z_k - x_k)) < g(x_k) + \sigma
				2^{-j}  \langle \nabla g(x_k), z_k - x_k \rangle \},
			\]
			where $\sigma \in (0, 1)$. Now compute
			\begin{eqnarray*}
				\gamma_k & =& 2^{- l(k) } \\
				y_k & = &x_k + \gamma_k (z_k - x_k) \\
				\alpha_k &=& g(y_k)
			\end{eqnarray*}
	\end{enumerate}

\item $x_{k+1}$ is chosen to be the $ \epsilon $-minimizer of the following problem
\[
\begin{cases}
\min f(x) \\
g(x) \leq \alpha_k + \eta_k \\
x \in C
\end{cases}
\]

\item Repeat Steps 2 and 3 until convergence.
\end{enumerate}
\begin{Remark} \textrm
We would like to add that we made the above algorithm as (SBP-LFS) to keep in view that the algorithm is for the case where the (SBP) problem has a smooth lower level objective function. This algorithm can be implemented using several stopping criteria. The key is that a natural stopping criteria must involve both the upper level and lower level objectives. Our stopping criteria is based on the fact whether the infimum $ \alpha $ of the lower level problem is easily known or not.\\
If $ \alpha $ is easily known then given $ \varepsilon > 0 $ as a threshold, one can stop the algorithm if
\[
| \alpha_{k+1} - \alpha | \leq \varepsilon \mbox{ and } | f (x_{k+1}) - f(x_k) | \leq \varepsilon \label{SCA} \tag{A}
\]
and choose $ x_{k+1} $ as the approximate solution. We will call this as Stopping Criteria (A).\\
If $ \alpha $ can not be known easily, we can devise a stopping criteria as follows. Since we construct $ \alpha_k $ such that $ \alpha_k \rightarrow \alpha $, the sequence $ \{ \alpha_k \} $ is a Cauchy sequence. Thus from the practical point of view for a given threshold $ \varepsilon > 0 $, we can stop the algorithm if
\[
| \alpha_{k+1} - \alpha_k | \leq \varepsilon \mbox{ and } | f (x_{k+1}) - f(x_k) | \leq \varepsilon \label{SCB} \tag{B}
\]
and as before take $ x_{k+1} $ as our approximate solution. We will mention this condition as Stopping Criteria (B).
\end{Remark}

\begin{Theorem} \label{smcon}
Assume that the solution set $S$ of problem \eqref{sbp4} is non-empty.
Let $\{ x_k \}$ be a sequence generated by the algorithm (SBP-LFS).  Let $\bar{x}$ be a cluster
point of the sequence $\{ x_k \}$. Then $\bar{x}$ is a solution of \eqref{sbp4}.
%
%
\end{Theorem}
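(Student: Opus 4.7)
The plan is to invoke Theorem \ref{gen} for the driving sequence $\widehat{\alpha}_k := \alpha_k + \eta_k$ appearing on the right-hand side of $(P_k)$ in step~3. Since $y_k$ is a convex combination of $x_k, z_k \in C$, we have $y_k \in C$, so $\alpha_k = g(y_k) \geq \alpha$ and $\eta_k > 0$ immediately yield $\widehat{\alpha}_k > \alpha$ for every $k$. The substantial work is to establish $\widehat{\alpha}_k \to \alpha$, for which I would first derive a quasi-descent inequality for $g(x_k)$. Combining the Armijo rule of step~2(b) with the feasibility of $x_{k+1}$ in $(P_k)$ yields
\[
g(x_{k+1}) \leq g(y_k) + \eta_k \leq g(x_k) + \sigma \gamma_k \langle \nabla g(x_k), z_k - x_k \rangle + \eta_k,
\]
in which the inner-product term is nonpositive by Lemma \ref{lemma1}(i). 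Because $g(x_k) \geq \alpha$, Lemma \ref{fejer} shows that $g(x_k)$ converges to some $\alpha^* \geq \alpha$, and telescoping delivers the summability estimate
\[
\sigma \sum_{k} \gamma_k \bigl( -\langle \nabla g(x_k), z_k - x_k \rangle \bigr) \leq g(x_0) - \alpha^* + \sum_{k} \eta_k < \infty.
\]

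The main obstacle is to upgrade a cluster point $\bar{x}$ of $\{x_k\}$ to a minimizer of $g$ on $C$. Let $x_{k_j} \to \bar{x}$; passing to a further subsequence so that $\beta_{k_j} \to \bar{\beta} \in [\tilde{\beta}, \hat{\beta}]$, continuity of $\nabla g$ and nonexpansiveness of $P_C$ imply $z_{k_j} \to \bar{z} := P_C(\bar{x} - \bar{\beta} \nabla g(\bar{x}))$. In view of Lemma \ref{lemma1}(ii), it suffices to show $\langle \nabla g(\bar{x}), \bar{z} - \bar{x} \rangle = 0$, and here the delicate point is that the backtracking step sizes $\gamma_{k_j}$ may, along a further subsequence, either remain bounded away from zero or shrink to zero. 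In the first subcase, the summability above together with $\gamma_{k_j}$ bounded below forces $\langle \nabla g(x_{k_j}), z_{k_j} - x_{k_j} \rangle \to 0$, and taking limits settles this case. In the second subcase ($\gamma_{k_j} \to 0$), the minimality in the definition of $l(k_j)$ means the Armijo inequality fails at $2 \gamma_{k_j}$:
\[
g\bigl( x_{k_j} + 2 \gamma_{k_j} (z_{k_j} - x_{k_j}) \bigr) \geq g(x_{k_j}) + 2 \sigma \gamma_{k_j} \langle \nabla g(x_{k_j}), z_{k_j} - x_{k_j} \rangle.
\]
Dividing by $2 \gamma_{k_j}$, applying the mean value theorem to the left-hand side, and using continuity of $\nabla g$, I pass to the limit to obtain $\langle \nabla g(\bar{x}), \bar{z} - \bar{x} \rangle \geq \sigma \langle \nabla g(\bar{x}), \bar{z} - \bar{x} \rangle$. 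Since $\sigma \in (0,1)$ and the quantity is nonpositive, the common value must be $0$, and Lemma \ref{lemma1}(ii) yields $\bar{x} \in \mathrm{argmin}_C g$.

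To close the argument, $g(\bar{x}) = \alpha$ combined with $g(x_k) \to \alpha^*$ forces $\alpha^* = \alpha$; together with $\alpha \leq g(y_k) \leq g(x_k)$ this gives $\alpha_k = g(y_k) \to \alpha$, and since $\eta_k \to 0$ we conclude $\widehat{\alpha}_k \to \alpha$. Finally, for any $\tilde{x} \in \mathrm{argmin}_C g$ the inequality $g(\tilde{x}) = \alpha < \widehat{\alpha}_k$ makes $\tilde{x}$ feasible in every $(P_k)$, so the minimization in step~3 gives $f(x_{k+1}) \leq f(\tilde{x})$. Passing to the limit along the subsequence and invoking continuity of the convex function $f$ then yields $f(\bar{x}) \leq f(\tilde{x})$ for every $\tilde{x} \in \mathrm{argmin}_C g$, which is the desired conclusion that $\bar{x}$ solves (SBP).
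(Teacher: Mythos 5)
Your proposal follows the same architecture as the paper's proof: establish that $\bar{x}\in\mathrm{argmin}_C g$ via the projected-gradient/Armijo machinery and Lemma \ref{lemma1}(ii), then transfer optimality of $f$ through the feasibility of lower-level minimizers in $(P_k)$. Two of your technical choices differ from the paper's and are worth noting. First, you run the Fej\'er argument (Lemma \ref{fejer}) on $g(x_k)$ and telescope to get summability of $\gamma_k\,|\langle\nabla g(x_k),z_k-x_k\rangle|$, whereas the paper runs it on $\alpha_k$ and squeezes $0<-\sigma\gamma_k\langle\nabla g(x_k),z_k-x_k\rangle<\alpha_{k-1}+\eta_{k-1}-\alpha_k\to 0$ directly; both deliver $\gamma_k\langle\nabla g(x_k),z_k-x_k\rangle\to 0$, so this is essentially cosmetic. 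Second, in the delicate case $\gamma_{k_j}\to 0$ you exploit failure of the Armijo test at $2\gamma_{k_j}$ together with the mean value theorem, while the paper fixes an integer $M$, passes to the limit in the failed test at step $2^{-M}$, and contradicts Lemma \ref{lemma4}. Your variant is clean and, if anything, handles the quantifiers more transparently; you are also more careful than the paper in extracting a convergent subsequence of $\beta_k$ before defining $\bar{z}$.

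There is, however, one genuine gap at the end. Step 3 of (SBP-LFS) produces an $\epsilon$-minimizer of $(P_k)$, not an exact minimizer, so from feasibility of $\tilde{x}\in\mathrm{argmin}_C g$ you may only conclude $f(x_{k+1})\leq f(\tilde{x})+\epsilon$, not $f(x_{k+1})\leq f(\tilde{x})$. Passing to the limit then yields only $f(\bar{x})\leq f(\tilde{x})+\epsilon$, i.e.\ that $\bar{x}$ is an $\epsilon$-approximate solution; to recover the stated conclusion one must either assume exact minimization in Step 3 or drive the tolerance to zero with a sequence $\epsilon_k\downarrow 0$, which is precisely the point the paper makes explicitly at the close of its proof. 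A second, minor omission: your descent inequality and case analysis implicitly assume Step 2(b) always fires; you should dispatch the (easy) situation where Step 2(a) occurs along a subsequence, in which case $\langle\nabla g(x_{k_j}),z_{k_j}-x_{k_j}\rangle=0$ and the limit $\langle\nabla g(\bar{x}),\bar{z}-\bar{x}\rangle=0$ is immediate, and also note that when 2(a) fires $y_k$ is undefined and $\alpha_k=g(x_k)$, so the inequality $\alpha_k\leq\alpha_{k-1}+\eta_{k-1}$ (or your $g(x_{k+1})\leq g(x_k)+\eta_k$) still holds with the inner-product term replaced by zero.
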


\begin{proof}
Note that
\[
g(x_k) \leq \alpha_{k-1} + \eta_{k-1}.
\]
If $x_{k}$ is chosen according to Step 2(a), then $\alpha_{k} = g(x_{k})$, in which case
\[
\alpha_{k} \leq \alpha_{k-1} + \eta_{k-1}.
\]
Suppose $x_{k}$ is chosen according to Step 2(b). Then
\[
\alpha_k = g(y_k) \leq g(x_k)
\]
which immediately implies
\[
\alpha_{k} \leq \alpha_{k-1} + \eta_{k-1}.
\]
Note also that for any $k$, $\alpha_k$ is either $g(x_k)$ or $g(y_k)$. Also note that
$g(y_k) \leq g(x_k)$.

From Lemma \ref{fejer}, we have that $\alpha_k \to \bar{\alpha}$ for some $\bar{\alpha}$.

Since $\bar{x}$ is a cluster point, there is a subsequence of $\{x_k\}$ which converges to $\bar{x}$.
Without loss of generality, let us denote this subsequence by $\{x_k \}$ itself.  By the definition,
we see that $z_k \to \bar{z}$ along this subsequence where $\bar{z} = P_C (\bar{x} - \beta \nabla g(\bar{x}))$ and $\beta$ is limit of $\beta_k$ as $k\to \infty$.
Now two cases arise:
one in which there is a subsequence along which $ \langle \nabla g(x_k), z_k - x_k \rangle = 0$
and the second one where $\langle \nabla g(x_k), z_k - x_k \rangle < 0$ for all $ k $ after a finitely many $ k $'s which we can ignore.

In Case 1, we have $\langle \nabla g(\bar{x}), \bar{z} - \bar{x} \rangle = 0$ and hence $\bar{x}$
minimizes $g(x)$ over $C$. Moreover,
\[
g(\bar{x}) = \lim_{k \to \infty} g(x_k) \leq \bar{\alpha}.
\]
and hence $\bar{x}$ solves \eqref{sbp4}.

Therefore we consider the Case 2. Therefore we can assume that there is no subsequence along which
$\langle \nabla g(\bar{x}), \bar{z} - \bar{x} \rangle = 0$ and hence
$\langle \nabla g(x_k), z_k - x_k \rangle < 0$ for all $k$. Thus
$\langle \nabla g(\bar{x}), \bar{z} - \bar{x} \rangle \leq 0$. It is also obvious
that
\[
g(\bar{x}) = \lim_{k \to \infty} g(x_k) \leq \bar{\alpha}
\]
It remains to show that  $\bar{\alpha} = \min_{x \in C} g(x)$.
By the definition of $\gamma_k$, we have
\[
g(y_k) < g(x_k) + \sigma \gamma_k  \langle \nabla g(x_k), z_k - x_k \rangle
\]
Therefore,
\[
0 < - \sigma \gamma_k  \langle \nabla g(x_k), z_k - x_k \rangle  < g(x_k) - g(y_k)
\leq \alpha_{k-1} + \eta_{k-1} - \alpha_k .
\]
Since the RHS tends to zero as $k \to \infty$, we must have
\[
\bar{\gamma} \langle \nabla g(\bar{x}), \bar{z} - \bar{x} \rangle = 0.
\]
If $\bar{\gamma} \neq 0$, then $\langle \nabla g(\bar{x}), \bar{z} - \bar{x} \rangle = 0$ and hence
$\bar{x}$ minimizes $g(x)$ over $C$.

Therefore, we consider the case in which $\bar{\gamma} = 0$.
Since $\bar{\gamma} = 0$, $l(k) \to \infty$ as $k \to \infty$. Thus for each positive integer $M > 0$,
there exists $k_0$ such that for all $k \geq k_0$, we have
\begin{equation}\label{Mchoice}
g(x_k + 2^{-M} (z_k - x_k))  \geq g(x_k) + \sigma 2^{-M} \langle \nabla g(x_k), z_k - x_k \rangle.
\end{equation}
Letting $k \to \infty$, we have for each positive integer $M$
\[
g(\bar{x} + 2^{-M} (\bar{z} - \bar{x})) \geq g(\bar{x}) + \sigma 2^{-M} \langle \nabla g(\bar{x}, \bar{z} - \bar{x} \rangle.
\]
Now suppose
$\langle \nabla g(\bar{x}), \bar{z} - \bar{x} \rangle < 0$.
By Lemma \ref{lemma4}, there eixsts $\hat{\gamma}  < 1$ such that
\[
g(\bar{x} + \gamma (\bar{z} - \bar{x})) < g(\bar{x}) + \sigma \gamma \langle \nabla g(\bar{x}, \bar{z} - \bar{x} \rangle
\]
for all $\gamma < \hat{\gamma}$.
This contradicts \eqref{Mchoice} for all large $M$ such $2^{-M} < \hat{\gamma}$ and hence
$\langle \nabla g(\bar{x}), \bar{z} - \bar{x} \rangle = 0$, proving the fact that $\bar{x}$ minimizes $g(x)$ over $C$.\\
Let us now consider $ x^{*} \in \mbox{arg}\min\limits_C g $, then $ x^{*} $ is a feasible point for the problem $(P_k)$. As we have assumed $ x_{k+1}$ to be an $ \epsilon-$minimizer of $(P_k)$, we can conclude that
\[
f(x_{k+1}) \leq f(x_{k+1}^{*}) + \epsilon \leq f(x^{*}) + \epsilon
\]
where $ x_{k+1}^{*} $ is a solution of $(P_k)$. Now as $ k \rightarrow \infty $, from the above inequality we get
\[
f(\bar{x}) \leq f(x^{*}) + \epsilon,
\]
which implies that $ \bar{x} $ is an $ \epsilon$- approximate solution of the (SBP) problem. Further note that if we consider a decreasing sequence $ \epsilon_k \downarrow 0 $ and use $ \epsilon_k $ instead of $ \epsilon $ in the problem $(P_k)$, then we will get $ f(\bar{x}) \leq f(x^{*}) $. Which indicates that the algorithm (SBP-LFS) actually leads to a solution of the (SBP) problem.
\end{proof}

In the above result, we assumed the existence of a cluster point for the iterations. This is an obvious consequence of the assumption that either the convex set $C$ or the sublevel sets of either of the objective functions is compact. We can also derive this result when the solution set of \eqref{sbp4} is assumed to be compact. We state this as the following theorem.

\begin{Theorem}
Let the solution set of \eqref{sbp4} is non-empty and bounded. Then the iterates of our algorithm are bounded and hence has a subsequence which converges to a minimizer of \eqref{sbp4}.
\end{Theorem}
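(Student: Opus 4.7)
The plan is to use Lemma \ref{rcl} as the main tool, since its hypothesis that $S = \text{argmin}_{S_0} f$ is non-empty and bounded matches exactly the hypothesis of the theorem (here $S$ is the solution set of \eqref{sbp4}). If I can produce constants $M_1, M_2$ such that $g(x_k) \leq M_1$ and $f(x_k) \leq M_2$ for every $k$, then since $x_k \in C$ the lemma immediately gives that $\{x_k\}$ lies in a bounded set $K$. Boundedness then yields a convergent subsequence, and Theorem \ref{smcon} (which only requires the existence of a cluster point) guarantees that the limit of any such subsequence is a minimizer of \eqref{sbp4}.

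To bound $\{g(x_k)\}$, I would recall from the proof of Theorem \ref{smcon} that $g(x_k) \leq \alpha_{k-1} + \eta_{k-1}$ (this is just the feasibility of $x_k$ in the problem $(P_{k-1})$ used to define it) and that the sequence $\{\alpha_k\}$ converges by Lemma \ref{fejer}, hence is bounded. Combined with summability of $\{\eta_k\}$ this yields a uniform upper bound $M_1$ for $g(x_k)$. To bound $\{f(x_k)\}$, fix any $x^\ast \in S$, so in particular $x^\ast \in \text{argmin}_C g$ and therefore $g(x^\ast) = \min_C g \leq \alpha_k \leq \alpha_k + \eta_k$. Thus $x^\ast$ is a feasible point of every $(P_k)$. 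Since $x_{k+1}$ is an $\epsilon$-minimizer of $(P_k)$, we obtain
\[
f(x_{k+1}) \leq f(x^\ast) + \epsilon,
\]
giving the uniform bound $M_2 := f(x^\ast) + \epsilon$ for $k \geq 1$ (and $f(x_0)$ can be absorbed into the maximum).

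With these two bounds in hand, Lemma \ref{rcl} shows that $\{x_k\} \subset K$, which is bounded, and the Bolzano--Weierstrass theorem produces a convergent subsequence $x_{k_j} \to \bar{x}$. Since $\bar{x}$ is a cluster point of $\{x_k\}$, Theorem \ref{smcon} concludes that $\bar{x}$ solves \eqref{sbp4}.

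I do not anticipate a serious obstacle: the only small subtlety is in verifying that the constants $M_1, M_2$ can be chosen uniformly in $k$, but both fall out cleanly from the structure already exploited in Theorem \ref{smcon} (convergence of $\alpha_k$ for the $g$-bound, feasibility of a fixed solution $x^\ast \in S$ for the $f$-bound). The real work has already been done in Lemma \ref{rcl}, which codifies the coercivity-type property of the pair $(f,g)$ on $C$ whenever the bilevel solution set is compact.
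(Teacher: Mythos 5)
Your proposal is correct and follows essentially the same route as the paper: bound $g(x_k)$ and $f(x_k)$ uniformly, invoke Lemma \ref{rcl} to place the iterates in a bounded set $K$, extract a convergent subsequence, and conclude via Theorem \ref{smcon}. In fact you supply slightly more detail than the paper does, by explicitly deriving the uniform bound on $f(x_k)$ from the feasibility of a fixed $x^\ast \in S$ in every subproblem $(P_k)$ together with the $\epsilon$-minimality of $x_{k+1}$.
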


\begin{proof}
From the proof of Theorem \ref{smcon}, we know that the sequence $\{ \alpha_k \}$ is bounded and
hence the sequence of iterates $\{ x_k \}$ belong to the set
\[
K:= C \cap \{ x : f(x) \leq M_1 \} \cap \{ g(x) \leq M_2 \}
\]
for some positive numbers $M_1$ and $M_2$. Now by lemma \ref{rcl} we can say that if the solution set of the (SBP) problem is bounded then $ K $ is also bounded. This implies that $ \{ x_k \} $ is a bounded sequence and hence has a convergent subsequence. Then theorem \ref{smcon} ensures that the limit of this convergent subsequence belongs to the solution set of the (SBP) problem and hence proving the theorem.
\end{proof}

\section{Numerical Experiments}
The numerical experiments presented here were coded in MATLAB version R2006a and run on a computer with an Intel core i7 vpro processor running at 3.10 GHz using 8 GB of RAM, running WINDOWS version 10. To solve the sub-problem in our algorithm we have used the MATLAB software CVX designed to solve convex optimization problem. Also the data presented in this section are rounded up to 6 decimal numbers. To the best of our knowledge we are not aware of any library of test functions for (SBP) problems. A primary reason for this could be that literature on the algorithms for (SBP) problem has just began to grow and thus most authors devise their own problems or consider a problem from applications, see for example \cite{Beck}, \cite{Helou}.
\subsection{(SBP) with affine upper level objective} \label{example1}
Here we consider the following simple bilevel optimization problem with multiple lower level solutions.
\begin{equation}\label{ex1}
\begin{cases}
\min x_1 + x_2 -1  \\
\mbox{subject to } \\
(x_1, x_2) \in \mbox{argmin} \{ g(y_1, y_2) : (y_1, y_2 )\in C \},
\end{cases}
\end{equation}
Where $ C = \{ (y_1, y_2) \in \mathbb{R}^2 : y_1^2 + y_2^2 \leq 2, -3 \leq y_1, y_2 \leq 0.5 \}$ and
\begin{eqnarray*}
 g( y_1, y_2 ) =
\begin{cases}
( \sqrt{y_1^2 + y_2^2 } - 1 )^2 \quad &\mbox{if} \quad \sqrt{y_1^2 + y_2^2 } \geq 1 \\
0 \quad &\mbox{if} \quad \sqrt{y_1^2 + y_2^2 } < 1.
\end{cases}
\end{eqnarray*}
Then clearly, $ \mbox{argmin}_C g  = \{( y_1, y_2 ) \in \mathbb{R}^2 : y_1^2 + y_2^2 \leq 1, -3 \leq y_1, y_2 \leq 0.5 \} $. A simple calculation will show that $ ( - \frac{1}{ \sqrt{2} }, - \frac{1}{ \sqrt{2} } )$ is the solution of this simple bilevel problem.\\
Note that in this example both the lower and upper level functions are differentiable convex functions with gradient Lipschitz continuous and the constraint set is also convex. We have applied the algorithm for the smooth (SBP) problem presented in section \ref{smoothalgo} and the algorithm presented by Solodov in his paper \cite{Solodov1} to compare the obtained results and also the time and iteration number taken by the algorithms to attain the desired accuracy.\\

First we provide a small proof of the fact that the objective functions in this problem satisfy Solodov's assumption in \cite{Solodov1} i.e. both the functions have Lipschitz continuous gradient. Note that the upper level function is an affine function hence has constant gradient which automatically satisfies the criteria. So let us focus on the lower level function $ g(y_1, y_2)$ and see how the gradient of this function given by
\begin{eqnarray*}
 \nabla g( y_1, y_2 ) =
\begin{cases}
	2(y_1,y_2) - 2 \frac{(y_1, y_2)}{\sqrt{y_1^2 + y_2^2 }} \quad &\mbox{if} \quad \sqrt{y_1^2 + y_2^2 } \geq 1 \\
	0 \quad &\mbox{if} \quad \sqrt{y_1^2 + y_2^2 } < 1.
\end{cases}
\end{eqnarray*}
is a Lipschitz continuous function. Let $ D $ be the unit disc in $ \mathbb{R}^2 $, then
\begin{equation*}
\nabla g (y_1,y_2) = 2 [(y_1, y_2) - P_D(y_1, y_2)].
\end{equation*}
Now we know that $ P_D $, the projection map on a convex set is non-expansive and therefore Lipschitz continuous also. Which implies that $ \nabla g $ is also a Lipschitz continuous function. We can also see this clearly from the following inequalities.
\begin{eqnarray*}
\| \nabla g (x_1,x_2) - \nabla g (y_1,y_2) \| &=& 2 \| (x_1, x_2) - P_D(x_1, x_2) - (y_1, y_2) + P_D(y_1, y_2) \| \\
&\leq& 2 \|(x_1, x_2) - (y_1, y_2)\| + 2\| P_D(x_1, x_2)- P_D(y_1, y_2)\|\\
&\leq& 4\|(x_1, x_2) - (y_1, y_2)\|.
\end{eqnarray*}
\noindent Next we present the results of the algorithm (SBP-LFS) and Solodov's algorithm [see \cite{Solodov1}] when applied to the problem (\ref{ex1}). Note that there is no such parameter like $ \eta_k $ in Solodov's algorithm so the results of Solodov's algorithm are given in terms of the threshold ($ \epsilon $) value.\\
\noindent Though $ \inf_C g = \alpha = 0 $ in this case, it may not be known for an (SBP) problem of this form. So we will proceed with the Stopping Criteria (B) and the result is given as follows.\\

\noindent \begin{tabular}{|l|l|l|l|l|l|l|}\hline

\thead{Threshold\\ ($\varepsilon$)}	& $ \eta_k $	& Algorithm	& \thead{Solution\\($x^*$)}	& \thead{Optimal value\\ $ f(x^*) $}	& \thead{Iteration\\ number}	& \thead{CPU Time\\(in Sec.) }\\
\hline
$ 10^{-5} $	& $ \frac{1}{10^k}$	& SBP-LFS	& $(-0.7071,   -0.7071)$	& $ -2.414214 $	& $18$	& 40.591314 \\
\hline
$ 10^{-5} $	& $ \frac{1}{100^k}$	& SBP-LFS	& $(-0.7071,   -0.7071)$	& $ -2.414214 $	& $ 17 $	& 25.205724 \\
\hline
$ 10^{-5} $	& $ \frac{1}{1000^k}$	& SBP-LFS	& $(-0.7071,   -0.7071)$	& $ -2.414214 $	& $ 17 $	& 25.647749 \\
\hline
$ 10^{-5} $	& -	& Solodov	& $(-0.7087, -0.7087)$	& $ -2.417368 $	& 318	& 504.377589 \\
\hline
$ 10^{-7} $	& $ \frac{1}{10^k}$	& SBP-LFS	& $(-0.7071,   -0.7071)$	& $ -2.414214 $	& $ 25 $	& 37.663035 \\
\hline	
$ 10^{-7} $	& $ \frac{1}{100^k}$	& SBP-LFS	& $(-0.7071,   -0.7071)$	& $ -2.414214 $	& $ 24 $	& 37.805480 \\
\hline
$ 10^{-7} $	& $ \frac{1}{1000^k}$	& SBP-LFS	& $(-0.7071,   -0.7071)$	& $ -2.414214 $	& $ 24 $	& 38.073170 \\
\hline
$10^{-7}$	& -	& Solodov	& $ (-0.7073, -0.7073) $	& $ -2.414530 $	& $ 3164 $	& $ 5760.272470 $\\
\hline
$ 10^{-9} $	& $ \frac{1}{10^k}$	& SBP-LFS	& $(-0.7071,   -0.7071)$	& $ -2.414214 $	& $30 $	& 47.205540 \\
\hline
$ 10^{-9} $	& $ \frac{1}{100^k}$	& SBP-LFS	& $(-0.7071,   -0.7071)$	& $ -2.414214 $	& $ 29 $	& 44.255782 \\
\hline
$ 10^{-9} $	& $ \frac{1}{1000^k}$	& SBP-LFS	& $(-0.7071,   -0.7071)$	& $ -2.414214 $	& $ 28 $	& 43.404657 \\
\hline
$ 10^{-9} $	& $ -	$	& Solodov	& $(-0.7071,   -0.7071)$	& $ -2.414214 $	& $ \approx 10000 $	& - \\
\hline

\end{tabular}
$~$\\

\begin{Remark}
 While executing the algorithm, we have noted that the solution does not depend on the choice of the initial point selection for the algorithm. Only there can be slight variation in the execution time for different initial points. We have also obtained $ \min_C g = 0 $ for all the cases mentioned in the above table.
\end{Remark}

\subsection{Least $ l_1 $-norm solution of system of linear equations}
Finding sparse solution of linear system like $ Ax = b $ has been a major
requirement in many modern applications and notably so in compressed sensing. The sparse solutions are often obtained by minimizing the $ l_1$-norm subject to the constraints $ Ax = b $. For a nice survey of application of this problem to compressed sensing see for example Bryan and Leise \cite{bl-2013}. It might however happen that it might not be possible to have an exact solution of $ Ax = b $ or it might not have a solution as in the case of over-determined systems. Then we need to be satisfied with least square solutions. In fact if we seek a sparse least square solution then we are essentially solving a simple bilevel problem with a non-smooth upper level objective and a smooth lower level objective. It is also well-known that the least square approach in itself is an important approach to find a solution to the system $ Ax = b $ and thus if we seek a sparse solution we are again in the simple programming set up. The following numerical experiments show that simple bilevel programming approach indeed appears to be an effective way to compute a sparse solution of a linear system $ Ax = b$. In the first example we find the least norm solution of a system of equations with $ 4 $ variables and $ 3 $ equations. The second example is about solving the same problem with $ 15 $ variables and $ 5 $ equations. In both the examples $ Ax = b $ has exact solutions. In the third example $ Ax = b $ does not have any solution, so we aim to solve $ \min \|x\|_1 \mbox{ subject to } x \in \mbox{argmin}\{\|Ay - b\|^2\}$.
\begin{enumerate}[(i)]
	\item Consider the problem of finding least $l_1$-norm solution of the linear system $Ax = b$,
where
\[ A=
\begin{bmatrix}
\phantom{-}1	& \phantom{-}2	& -3	&\phantom{-} 1 \\
\phantom{-}3 	&           -1	& -2	&           -4 \\
\phantom{-}2	& \phantom{-}3	& -5	& \phantom{-} 1	
\end{bmatrix}
\quad \mbox{and} \quad
b=
\begin{bmatrix}
          -2 \\
\phantom{-}1 \\
          -3
\end{bmatrix}.
\]
This problem can be written as a simple bilevel problem as follows
\begin{equation}\label{ex2}
\begin{cases}
\min \| x \|_1  \\
\mbox{s.t. } x \in \mbox{argmin} \{ \| Ax -b \|^2 : x\in \mathbb{R}^4 \}.
\end{cases}
\end{equation}
The solution of this system of equations is given by $ x_1 = s+ t $, $ x_2 = s -t -1 $, $ x_3 = s $, $ x_4 = t $; where $ s, t \in \mathbb{R} $ and $ x= (x_1, x_2, x_3, x_4)$.\\
Even though the upper-level function of this (SBP) problem is non-smooth we can still apply our algorithm to this problem as the only requirement for the algorithm is the smoothness of the lower-level function. Note that in this case the minimum value of the lower level problem is known to us and $ \min \{ \| Ax -b \|^2 : x\in \mathbb{R}^4 \} = 0 $, considering the fact that the system of equations mentioned above has at least one solution. By the convergence analysis of the algorithm we also know that $ f(x_k) = \| x_k \|_1 $ converges. For any given $ \varepsilon > 0 $, we have used these information to implement the stopping criteria \eqref{SCA}. The results for different thresholds and different choices of $ \eta_k $ are presented in the following table.

\begin{flushleft}
\begin{tabular}{|c|c|c|c|c|c|c|}
\hline

\thead{Threshold\\ ($\varepsilon$)}	& $ \eta_k $	& \thead{Solution\\($x^*$)}	& $ \| Ax^* -b \|^2 $	& \thead{Optimal value\\ $ \| x^* \|_1 $}	& \thead{Iteration\\ number} & \thead{CPU Time\\(in sec.)}\\
\hline
$ 10^{-5} $	& $ \frac{1}{10^k} $	& $(0, -1, 0, 0)$	& $ 3.441514e-10 $	& $ 9.999846e-01$	& $ 12 $    & $6.697958$ \\
\hline
$ 10^{-5} $	& $ \frac{1}{100^k} $	& $(0, -1, 0, 0)$	& $ 1.078392e-10 $	& $ 9.999849e-01 $	& $ 12 $  & $ 6.466460$ \\	
\hline
$ 10^{-5} $	& $ \frac{1}{1000^k} $	& $(0, -1, 0, 0)$	& $ 1.508332e-10 $	& $ 9.999848e-01 $	& $ 12 $ & $ 6.257101$\\	
\hline
$ 10^{-7} $	& $ \frac{1}{10^k} $	& $(0, -1, 0, 0)$	& $ 3.828847e-10 $	& $ 9.999846e-01 $	& $ 13 $   & $ 6.934747$\\
\hline
$ 10^{-7} $	& $ \frac{1}{100^k} $	& $(0, -1, 0, 0)$	& $ 3.826991e-10 $	& $ 9.999845e-01 $	& $ 14 $  & $ 7.490094$\\
\hline
$ 10^{-7} $	& $ \frac{1}{1000^k} $	& $(0, -1, 0, 0)$	& $ 3.832178e-10 $	& $ 9.999846e-01 $	& $ 14 $ & $ 7.440750$ \\
\hline
$ 10^{-9} $	& $ \frac{1}{10^k} $	& $(0, -1, 0, 0)$	& $ 3.848853e-10 $	& $ 9.999846e-01 $	& $ 35 $   & $ 20.496428$\\
\hline
$ 10^{-9} $	& $ \frac{1}{100^k} $	& $(0, -1, 0, 0)$	& $ 3.843531e-10 $	& $ 9.999846e-01 $	& $ 53 $  & $ 34.054234$\\
\hline
$ 10^{-9} $	& $ \frac{1}{1000^k} $	& $(0, -1, 0, 0)$	& $ 3.855139e-10 $	& $ 9.999845e-01 $	& $ 28 $ & $ 15.718556$\\
\hline

\end{tabular}
\end{flushleft}

\item Another problem which establishes our result is given by
\begin{equation}\label{ex2.1}
\begin{cases}
\min \| x \|_1  \\
\mbox{s.t. } x \in \mbox{argmin} \{ \| Ax -b \|^2 : x\in \mathbb{R}^{15} \},
\end{cases}
\end{equation}
where
\[
A = \begin{bmatrix}
\phantom{-}3 & \phantom{-}1 & \phantom{-}2 & -1 & \phantom{-}1 & \phantom{-}1 & -2 & \phantom{-}1 & -2 & -3 & \phantom{-}1 & -1 & \phantom{-}2 & -1 & -1\\
\phantom{-}1 & -2 & -1 & \phantom{-}1 & \phantom{-}2 & -1 & -1 & -2 & -3 & \phantom{-}2 & \phantom{-}1 & -5 & -1 & \phantom{-}1 & -2\\
-2 & -1 & \phantom{-}1 & -1 & \phantom{-}1 & -1 & \phantom{-}2 & \phantom{-}1 & -3 & \phantom{-}1 & \phantom{-}2 & \phantom{-}2 & \phantom{-}3 & \phantom{-}2 & \phantom{-}2 \\
-3 & -1 & \phantom{-}1 & \phantom{-}2 & -5 & -6 & \phantom{-}7 & -1 & -2 & -3 & \phantom{-}1 & -2 & \phantom{-}3 & \phantom{-}1 & \phantom{-}3\\
\phantom{-}4 & -1 & \phantom{-}2 & \phantom{-}4 & \phantom{-}5 & -1 & -2 & \phantom{-}1 & -3 & \phantom{-}1 & -1 & -2 & -3 & \phantom{-}4 & \phantom{-}5
\end{bmatrix}
\]
and $ b = \begin{bmatrix}
1\\
0\\
3\\
2\\
12
\end{bmatrix}.\\
$
Using the same stopping criteria as used in the above example we get the following results for $ \varepsilon = 10^{-5} $ and different choices for $ \eta_k $.

\begin{flushleft}
\begin{tabular}{|c|c|c|c|c|c|c|c|}
\hline

\thead{Threshold\\ ($\varepsilon$)} & $ \eta_k $	& $ \| Ax^* -b \|^2 $	& \thead{Optimal value\\ $ \| x^* \|_1 $}	& \thead{Iteration number} & \thead{CPU Time\\(in sec)}\\
\hline
$ 10^{-5} $ & $ \frac{1}{10^k} $	& $ 3.152108e-10 $	& $ 2.576560 $	& $ 17 $ & 8.542113 \\
\hline
$ 10^{-5} $ & $ \frac{1}{100^k} $	& $ 3.135185e-10 $	& $ 2.576563 $	& $ 17 $ & 8.662664 \\
\hline
$ 10^{-5} $ & $ \frac{1}{1000^k} $	& $ 3.147713e-10 $	& $ 2.576563 $	& $ 17 $ & 8.527413 \\
\hline
$ 10^{-7} $ & $ \frac{1}{10^k} $	& $ 9.196033e-11 $	& $ 2.576562 $	& $ 22 $ & 11.747009 \\
\hline
$ 10^{-7} $ & $ \frac{1}{100^k} $	& $ 5.471720e-11 $	& $ 2.576563 $	& $ 18 $ & 9.102962 \\
\hline
$ 10^{-7} $ & $ \frac{1}{1000^k} $	& $ 5.870428e-10 $	& $ 2.576563 $	& $ 23 $ & 12.486632 \\
\hline
$ 10^{-9} $ & $ \frac{1}{10^k} $	& $ 5.847742e-11 $	& $ 2.576563 $	& $ 107 $ & 61.940890 \\
$ 10^{-9} $ & $ \frac{1}{100^k} $	& $ 5.837643e-11 $	& $ 2.576563 $	& $ 62 $ & 34.612342 \\
\hline
$ 10^{-9} $ & $ \frac{1}{1000^k} $	& $ 5.833193e-11 $	& $ 2.576563 $	& $ 38 $ & 20.896115 \\
\hline

\end{tabular}
\end{flushleft}
with $ x^* = (0.3857, 0, 0, 0.0506, 0.6820,0,0,0,0.2233,0,0,0,0,0,1.2350) $.\\
We tried to improve the results by taking $ \varepsilon = 10^{-7} $, but the process was more time consuming without improving the outputs $(\| Ax^* -b \|^2 \mbox{ and } \| x^* \|_1)$ much. So we stopped the algorithm after $ 1001 $ and $ 10001 $ steps. In both the cases we got the results which is not very different from what we got for $ \varepsilon = 10^{-5} $ in $ 16 $ steps. To be precise we got $ x^* $ and $ \| x^* \|_1 $ same as we got in $ 16 $ steps with slightly improved values of $ \| Ax^* -b \|^2 $ as $ 6.265654e-7 $ and $ 6.122150e-07 $ in $ 1001 $ and $ 10001 $ steps respectively. So we can conclude that the best result we got for this problem is the one we got for $ \varepsilon = 10^{-5} $ in $ 16 $ steps which is not a bad one considering that we managed to obtain $ x^* $ with $ 5 $ non-zero entries and $ 10 $ zero entries.

\item We tried our algorithm on another least square problem where $ Ax = b $ does not have an exact solution. Here we show that our algorithm successfully finds a minimizer of the problem $ \min \|Ax-b\|^2 $ with least norm. We consider the least square problem with
\[
A = \begin{bmatrix}
\phantom{-}2	& \phantom{-}1	& -1\\
\phantom{-}0	& \phantom{-}2	& \phantom{-}1\\
-1	& \phantom{-}2	& \phantom{-}0\\
-2	& -1	& \phantom{-}2\\
\phantom{-}1	& \phantom{-}3	& -3\\
\phantom{-}3	& -1	& \phantom{-}0\\
-3	& \phantom{-}1	& -2\\
\phantom{-}1	& -4	& \phantom{-}1\\
\phantom{-}5	& \phantom{-}2	& -1\\
\phantom{-}4	& \phantom{-}0	& -3
\end{bmatrix}
\mbox{ and } b = \begin{bmatrix}
\phantom{-}0\\
\phantom{-}2\\
\phantom{-}3\\
\phantom{-}1\\
\phantom{-}0\\
\phantom{-}2\\
\phantom{-}4\\
-2\\
\phantom{-}5\\
\phantom{-}2
\end{bmatrix}.
\]
The best result by applying our algorithm is obtained in $ 12 $ iterations with the approximated solution $ x^* = (0.2334, 0.6718, -0.0153) $, $ \|x^*\|_1 = 0.9205411 $ and the lower level functional value $ \|Ax^* - b\|^2 = 42.57734 $. Following are the details.

\begin{flushleft}
\begin{tabular}{|c|c|c|c|c|c|}
\hline

$\varepsilon$	& $ \eta_k $	& $ \| Ax^* -b \|^2 $	& \thead{Optimal value\\ $ \| x^* \|_1 $}	& \thead{Iteration\\ number} & \thead{CPU Time\\(in sec.)}\\
\hline
$ 10^{-5} $	& $ \frac{1}{10^k} $	& $ 4.257734e+01 $	& $ 9.205411e-01$	& $ 12 $    & $11.326960$ \\
\hline
$ 10^{-5} $	& $ \frac{1}{100^k} $	& $ 4.257734e+01 $	& $ 9.205412e-01 $	& $ 12 $  & $ 6.133314$ \\	
\hline
$ 10^{-5} $	& $ \frac{1}{1000^k} $	& $ 4.257734e+01 $	& $ 9.205412e-01 $	& $ 12 $ & $ 6.049399$\\	
\hline

\end{tabular}
\end{flushleft}

\end{enumerate}

\subsection{Estimating distance of a given point from the solution set of a optimization problem } \label{example3}
A very important issue in convex optimization theory is to estimate the distance of a feasible point to the solution set of the problem. There is a very broad theory of error bounds for finding the upper estimate of the distance of a point $x \in \mathbb{R}^n$ from the set $C$, given by given convex inequalities i.e.
\begin{eqnarray}\label{1.17}
C= \{x \in \mathbb{R}^n: g_i (x) \leq 0, i=1,2,\dots,m\}
\end{eqnarray}
where each $g_i: \mathbb{R}^n \rightarrow \mathbb{R}$ $(i=1,2,\dots,m)$, is a convex function. It has been established for example in \cite{Manga98} that if the Slater constraint qualification holds along with some additional asymptotic conditions allow us to show that there exists a constant $\gamma>0$ such that
\begin{eqnarray*}
d(x,C) \leq \gamma\|f(x)_+\|
\end{eqnarray*}
where $f(x)_+= (\max\{g_1(x),0\}, \dots, \max\{g_m(x),0\})$. Now if we consider the problem of minimizing the function $\phi$ over the set as given by (\ref{1.17}), then in order to estimate the distance of a point $x$ from the argmin set it is tempting to write the argmin set as
\begin{eqnarray}
\mbox{argmin}_C \phi = \{x: \phi(x)- \alpha, g_1(x),g_2(x), \dots, g_m(x)\}
\end{eqnarray}
where $\alpha= \inf _C \phi$. Then one might be again tempted to use the error bound that we previously discussed and apply it in the case. However, the Slater condition fails if $ \alpha = \min_C \phi $. In fact if $\phi$ is strongly convex then an error bound can be obtained by using the machinery of gap functions.\\
However if $\phi$ is just convex and not strongly convex, then it is indeed a difficult problem to estimate the distance of a point $x$ from $\mbox{argmin}_C \phi$. The simple bilevel programming model approx to be a nice approach to solve this issue. Though we may not be able to generate an upper estimate of the distance by using the bilevel approach but it does provide an estimate of the distance to a desired level of precision.\\

Given a point $x_0$, calculating the distance of $x_0$ from the solution set of the following optimization problem
\begin{center}
$\min g(x)$\\
$x \in C$
\end{center}
where $g: \mathbb{R}^n \rightarrow \mathbb{R}$ is a differentiable convex function and $C \subset \mathbb{R}^n$ is a convex, compact set is same as solving the following bilevel problem
\begin{center}
$\min \frac{1}{2}\|x-x_0\|^2$\\
$s.t. x \in \mbox{argmin}_C g$
\end{center}
Now if we take $f(x)= \frac{1}{2}\|x-x_0\|^2$ and generate iterative points $x_k$ by the algorithm mentioned in this chapter, then $f(x_k)$ converges to the optimum value $\frac{1}{2}\|\bar{x} -x_0\|^2$ which gives the required distance of $x_0$ from the solution set.

As we have discussed above, calculating the distance of a given point from the solution set of an optimization problem can be represented as a simple bilevel programming problem. Here we will apply our algorithm for the smooth (SBP) problem to estimate the distance of a given point say $ a $ from the solution set of the following problem.
\begin{eqnarray} \label{ex3}
\min f(x) \nonumber \\
x \in C,
\end{eqnarray}
where $ C = \{ x= (x_1, x_2) \in \mathbb{R}^2 : x_1 + x_2 \leq 5 , x_1^2 + x_2^2 \leq 25 \} $ and $ f(x) = \max \{ x_1 -2 , 0 , x_2 -2 \} $. Clearly being the maximum of linear functions, $ f $ is also a convex function. We have used the stopping criteria \eqref{SCA} as it is also used in example (\ref{example1}). Let $ S $ be the solution set of the problem (\ref{ex3}). Then the following table represents the distance of a given point $ a $ from $ S $ \textit{i.e.} $ \mbox{dist}(a,S) $ and the point $ x_a \in S $ such that $ \mbox{dist}(a,S) = \| x_a - a \| $.
\bigskip\par

\begin{tabular}{|c|c|c|c|c|c|c|}
\hline
\thead{Point \\ ($ a $)}	& \thead{Threshold\\ ($\varepsilon$)}	& $ \eta_k $	& \thead{Solution point\\($ x_a $)}	& \thead{$ \mbox{Dist}(a,S) $\\ $ \| x_a - a \| $}	& \thead{Iteration\\No.} & \thead{CPU Time\\(in sec)}\\
\hline

$ (0, -3)$	& $ 10^{-5} $	& $ \frac{1}{10^k}$	& $ (0, -3) $	& $ 9.363386e-12 $	& $ 2 $   & 2.388520 \\
\hline
$ (0, -3)$	& $ 10^{-5} $	& $ \frac{1}{100^k}$	& $ (0, -3) $	& $ 2.161774e-11 $	& $ 2 $  & 2.336804\\
\hline

$ (0, -3)$	& $ 10^{-7} $	& $ \frac{1}{10^k}$	& $ (0, -3) $	& $ 9.363386e-12 $	& $ 2 $   & 2.316111 \\
\hline
$ (0, -3)$	& $ 10^{-7} $	& $ \frac{1}{100^k}$	& $ (0, -3) $	& $ 2.161774e-11 $	& $ 2 $  & 2.307831 \\
\hline
$ (2, 3)$	& $ 10^{-5} $	& $ \frac{1}{10^k}$	& $ (1.9999, 2) $	& $ 9.999990e-01 $	& $ 7 $    & 13.399663 \\
\hline
$ (2, 3)$	& $ 10^{-5} $	& $ \frac{1}{100^k}$	& $ (1.9999, 2) $	& $ 1 $	& $ 5 $  & 5.867909 \\
\hline
$ (2, 3)$	& $ 10^{-7} $	& $ \frac{1}{10^k}$	& $ (1.9999, 2) $	& $ 1 $	& $ 9 $   & 10.718645 \\
\hline
$ (2, 3)$	& $ 10^{-7} $	& $ \frac{1}{100^k}$	& $ (1.9999, 2) $	& $ 1 $	& $  6 $ & 7.061641 \\
\hline
$ (0, 3)$	& $ 10^{-5} $	& $ \frac{1}{10^k}$	& $ (0, 2) $	& $ 9.999990e-01 $	& $  7 $    & 8.285713\\
\hline
$ (0, 3)$	& $ 10^{-5} $	& $ \frac{1}{100^k}$	& $ (0, 2) $	& $ 1 $	& $ 5 $   & 5.673931 \\
\hline
$ (0, 3)$	& $ 10^{-7} $	& $ \frac{1}{10^k}$	& $ (0, 2) $	& $ 1 $	& $ 9 $    & 10.224498 \\
\hline
$ (0, 3)$	& $ 10^{-7} $	& $ \frac{1}{100^k}$	& $ (0, 2) $	& $ 1 $	& $ 6 $   & 6.891023 \\
\hline
\end{tabular}

\subsection{Markowitz Portfolio optimization model}
In \cite{Beck}, Beck and Sabach considered the Markowitz Portfolio optimization problem \cite{Portfolio} for their numerical experimentation. We tried to adapt the problem and fit it into the Simple bilevel framework. The specific example used in \cite{Beck}, can be remodelled as
\begin{eqnarray}
    &&\min \| x-a\|^2 \nonumber\\
    &&\mbox{subject to } x \in \mbox{argmin} \{ \langle x, Ax \rangle : \sum x_i = 1, \langle \mu,x \rangle  \geq r_0, x_i\geq 0 \},
\end{eqnarray}\label{MP}
where $\mu, x \in \mathbb{R}^8 $, $ r_0 \in \mathbb{R} $ and $ A \in \mathbb{R}^{8\times 8}$. We wanted to use the same data as mentioned in \cite{Beck}, but failed to do so as the matrix $A $ mentioned in their numerical example has negative eigen value which makes the lower level problem non-convex. Beck and Sabach claimed that the matrix $ A $ in their example does not have any negative eigen value but it has negative eigen value with small absolute value which may be the reason behind considering those negative eigen values as zero which justifies their claim. As we have used CVX in our program to implement the algorithm we could not neglect these negative eigen values as CVX identifies it. So we traced back to the data at http://www.princeton.edu/~rvdb/ampl/nlmodels/markowitz/ which was used in \cite{Beck}  and created the positive semi-definite covariance matrix which is given by
\[
A = \begin{bmatrix}
    \phantom{-}0.0009   &-0.0001    &\phantom{-}0.0001    &\phantom{-}0.0001   &-0.0003   &\phantom{-}0.0003   &-0.0013    &\phantom{-}0.0008\\
   -0.0001   &\phantom{-}0.0232    &\phantom{-}0.0113    &\phantom{-}0.0106    &\phantom{-}0.0118    &\phantom{-}0.0115    &\phantom{-}0.0110   &-0.0141\\
    \phantom{-}0.0001    &\phantom{-}0.0113    &\phantom{-}0.0283    &\phantom{-}0.0297    &\phantom{-}0.0329    &\phantom{-}0.0075   &\phantom{-}0.0219   &-0.0185\\
    \phantom{-}0.0001    &\phantom{-}0.0106   & \phantom{-}0.0297   & \phantom{-}0.0319    &\phantom{-}0.0371    &\phantom{-}0.0071   & \phantom{-}0.0231   &-0.0166\\
   -0.0003   &\phantom{-}0.0118    &\phantom{-}0.0329    &\phantom{-}0.0371    &\phantom{-}0.0500    &\phantom{-}0.0076    &\phantom{-}0.0245   &-0.0164\\
    \phantom{-}0.0003   &\phantom{-} 0.0115   &\phantom{-} 0.0075   & \phantom{-}0.0071    &\phantom{-}0.0076    &\phantom{-}0.0065   & \phantom{-}0.0044  &-0.0115\\
   -0.0013   & \phantom{-}0.0110    &\phantom{-}0.0219   & \phantom{-}0.0231   & \phantom{-}0.0245    &\phantom{-}0.0044    &\phantom{-}0.0554   &-0.0140\\
    \phantom{-}0.0008   &-0.0141   &-0.0185   &-0.0166   &-0.0164  & -0.0115   &-0.0140    &\phantom{-}0.1271
\end{bmatrix}
\]
We have considered the same $ \mu = (1.0630; 1.0633; 1.0670; 1.0853; 1.0882; 1.0778; 1.0820; 1.1605) $ as given in \cite{Beck} and the minimal return is chosen as $ r_0 = 0.05 $ for the problem (\ref{MP}). We have carried out numerical experiments for this portfolio optimization problem with different vectors $ a $ in the upper level objective function of the problem (\ref{MP}). Using stopping criteria \eqref{SCB}, we got the following optimal solutions for the threshold value $ \varepsilon = 10^{-10} $.
\begin{enumerate}[(i)]
    \item If we want to invest the most in the first share, we can choose $ a = (1,0,0,0,0,0,0,0) $ and we get the optimal portfolio as
    \[ x^* = ( 0.6784, 0.0470, 0.0469, 0.0461, 0.0459, 0.0464, 0.0462, 0.0431).\]
    In this case, the upper optimum value is $ 5.910165e-02 $ and the lower optimum value is $ 1.759577e-03 $, which is obtained in iterations $ 11 $ (17.582327 sec).

    \item If we choose $ a = (\frac{1}{2},\frac{1}{2},0,0,0,0,0,0) $ {i.e.} we want to invest as much as possible in the first two shares, we get the solution
\[x^* = (0.6784, 0.0470, 0.0469, 0.0461, 0.0459, 0.0464, 0.0462, 0.0431)\]
with upper level optimal value $ 1.247960e-01  $, the lower level objective value $ 1.759577e-03 $ and it was done in $ 96 $ iterations (149.148218 sec ).

\item Whereas if we want a dispersed portfolio and choose $ a = (\frac{1}{8}, \frac{1}{8}, \frac{1}{8}, \frac{1}{8}, \frac{1}{8}, \frac{1}{8}, \frac{1}{8}, \frac{1}{8}) $, we get
\[x^* = (0.1250, 0.1250, 0.1250, 0.1250, 0.1250, 0.1250, 0.1250, 0.1250) \]
as the optimum portfolio. The upper level optimum value $ 8.073716e-20 $ and the lower level optimum value $  9.463094e-03 $ was obtained in $ 2 $ iterations (2.365044 sec).

\item If we choose $ a = (0,\frac{1}{2},0,0,0,0,0,0,\frac{1}{2}) $ {i.e.} we want to invest as much as possible in the second and last shares, we get the solution
\[x^*= (0.6784, 0.0470, 0.0469, 0.0461, 0.0459, 0.0464, 0.0462, 0.0431)\]
with upper level optimal value $ 1.247960e-01  $, the lower level objective value $ 1.759577e-03 $ and it was done in $ 96 $ iterations (149.148218 sec ).

\end{enumerate}
 \textbf{Remark:} Note that the solution in (ii) is same as in (i), which implies that the choice of $ a $ in the upper level function does not effect the solution. The reason behind this is the choice of the threshold value $ \varepsilon $, as we choose $ \varepsilon $ very small, we force the algorithm to focus on the solution of the lower level problem more than the upper level one. Hence leading to same solution as in (i) and (ii). But if we increase $ \varepsilon$ little bit and choose $ \varepsilon = 10^{-8} $, then we get $ x^* = (0.3627,0.3627,0.0469,0.0461,0.0459,0.0464,0.0462,0.0431)$ as the new solution for the case in (ii) which reflects the effect of $ a = (\frac{1}{2},\frac{1}{2},0,0,0,0,0,0) $ with optimal value $ f^* = 2.513309e-02 $, $ g^* = 5.528271e-03 $ in 2 iterations (7.850798 sec), while the solution $ x^* $ for the case (i) remains unchanged.

\bibliographystyle{jnsao}
\bibliography{DempeJNSAO}

\end{document}